\documentclass[12pt]{article} 
\usepackage[T2A]{fontenc} 
\usepackage[utf8]{inputenc} 
\usepackage{comment, amsthm, amssymb, amsmath, mathrsfs, bbm, xcolor} 
\usepackage[draft]{hyperref}
\usepackage[english]{babel}
\newtheorem{conjecture}{Conjecture}

\newtheorem{theorem}{Theorem} 

\newtheorem{lemma}{Lemma}[section]

\usepackage{hyperref}

\theoremstyle{definition}
\theoremstyle{plain}
\newtheorem{definition}{Definition}[section]
\renewcommand{\leq}{\leqslant}
\renewcommand{\geq}{\geqslant}
\renewcommand{\le}{\leqslant}
\renewcommand{\ge}{\geqslant}

\DeclareMathOperator{\Tr}{Tr}

\begin{document}
\newcommand{\eps}{\varepsilon}
\renewcommand{\phi}{\varphi}

\newcommand{\suchthat}{\, : \,}
\newcommand{\commentDK}[1]{{\color{blue}{#1 -- DK}}}
\newcommand{\dima}[1]{\commentDK{#1}}  
\newcommand{\fedya}[1]{{\color{orange}{#1 -- FP}}}
\newcommand{\red}[1]{{\color{red}{#1}}}
\newcommand{\green}[1]{{\color{green}{#1}}}

\author{Ilya I. Bogdanov, Fedor Petrov, Anton Sadovnichiy, Fedor Ushakov}
\title{Biregular bipartite labeled multigraphs and perfect matchings in bipartite tensor products}
\maketitle
\begin{abstract}

In 2019, P. Higgins formulated~\cite{H} a question about bipartite
graphs (see Conjecture \ref{HigginsC} below); this question arises in the study of regular
finite semigroups. F. V. Petrov formulated \cite{P}
another combinatorial conjecture
(Conjecture \ref{BallsC}); Conjecture \ref{BallsC} implies Conjecture \ref{HigginsC} and seems simple itself. However, both conjectures remain
unproven in the general case. In the present paper, some special cases are proved,
 Conjecture \ref{4PartsC} is formulated in the same spirit, and some of its special cases are proved. In addition,
Conjecture \ref{HigginsC} is reduced to a matrix inequality
(Conjecture \ref{MatrixInequalityC}); this inequality is, in turn, also proved in a special case.

\end{abstract}

\section{Introduction}

\begin{definition}
    A bipartite labeled multigraph is a tuple $(L,R,E,u,v)$, where $A$, $B$, and $E$ are finite sets (left part, right part, and the set of edges, respectively); $u$ and $v$ are functions; $u: E \rightarrow L, v : E \rightarrow R$.
\end{definition}

\textbf{Remark.} $L$ and $R$ may intersect.

\begin{definition}
A \emph{perfect matching} in a (multi)graph is an involution $\theta$ on its vertex set $V$ such that $\forall v \in V$ there exists an edge between $v$ and $\theta(v)$. 
\end{definition}

For the $n\times k$ matrix $M$, by $M\otimes M^T$ we denote the symmetric $nk\times nk$ matrix with rows and columns indexed by pairs $(x,y)\in [n] \times [k]$
with entries $(M\otimes M^T)((x_1,y_1),(x_2,y_2)):=M(x_1,y_2)M(x_2,y_1)$ (this is not exactly the tensor
product of matrices, but we still use this notation). The question by Higgins~\cite{H} can be rephrased as follows.

\begin{conjecture}
  \label{HigginsC}
  Let $M$ be a binary $n\times k$ matrix. Assume that the bipartite graph with both parts of size $nk$ with (bipartite) adjacency matrix $M \otimes M^T$ has a perfect matching. Then an ordinary graph with adjacency matrix $M \otimes M^T$ also has a perfect matching.
\end{conjecture}

Conjecture \ref{HigginsC} is equivalent to the following (see Theorem \ref{WeakBallsAndHigginsAreEquiv}):

\begin{conjecture}\label{WeakBallsC}
  Let $M$ be a binary $n\times k$ matrix with non-negative integer entries with row sums $k$ and column sums $n$. Then an (non-bipartite) $nk$-regular multigraph with $nk$ vertices with adjacency matrix $M \otimes M^T$ has a perfect matching.
\end{conjecture}

\begin{definition} \label {SquareBracket}
    For two functions $u$ and $v$ with the same domain $E$, $u : E \rightarrow L, v : E \rightarrow R$, by $[u,v]$, we denote the weighted complete bipartite graph with parts $L$ and $R$, and the weight of the edge $(x,y) \in L \times R$ equal to $| \{ e \in E, u(e) = x, v(e) = y \} |$; we also denote by $[u,v]$ the $L \times R$ matrix with the weights of the edges of this graph. Actually, we identify both objects defined as $[u,v]$.
\end{definition}

Denote by $W_{L,R}$ the weighted complete bipartite graph with parts $L$ and $R$, and with all weights equal to $1$.

\begin{definition} \label {ActionA}
Denote the set of all biregular bipartite labeled multigraphs with $L = [n], R = [k], E=[n] \times [k]$ ($u$ and $v$ can be arbitrary) by $A_{n,k}$;
Denote the group of permutations of $[n] \times [k]$ by $S_{n,k}$.
Consider the following two-sided action of $S_{n,k}$ on $A_{n,k}$: $\sigma (u,v) = (u \circ \sigma^{-1}, v)$, $(u, v) \sigma = (u, v \circ \sigma)$. 
\end{definition}

\begin{definition} \label {ActionB}

For some $L$ and $R$, let $S_L$ be the group of permutations of $L$, and $S_R$ be the group of permutations of $R$. $S_L$ and $S_R$ act on the $L \times R$ -matrices: for a matrix $M$ and $\alpha \in S_L, \beta \in S_R$, $(\alpha M)_{i,j} = M_{\alpha^{-1}(i),j}$; $(\beta M)_{i,j} = M_{i, \beta^{-1}(j)}$. 

\end{definition}

Denote the set of all matrices of type $[n] \times [k]$ with non-negative integer entries, with row sums equal to $k$ and column sums equal to $n$, by $B_{n,k}$;
$[.,.]$ gives a map $A_{n,k} \rightarrow B_{n,k}$. For $G = (u,v) \in A_{n,k}$ we also denote $[u,v]$ by $[G]$

Denote the set of involutions in $S_{n,k}$ by $I = \{ \iota \in S_{n,k}, \iota^2 = id \}$.

The second author has formulated Conjecture \ref{BallsC} in \cite{P}. 

\begin{conjecture}\label{BallsC}
  $\forall (u,v) \in A_{n,k} \exists \iota \in I, [u \circ \iota, v] = W_{[n],[k]}$
\end{conjecture}

In \cite{P}, Conjecture \ref{BallsC} was formulated in terms of the existence of a certain swapping process. Namely, if the parts $L$ and $R$ correspond to
$n$ girls and $k$ colors, we may interpret
every edge $e \in E$ as a ball of the color $v(e)$ held by the girl $u(e)$. Thus, every girl has $k$ balls, and there are $n$ balls of each color. 
The 2-cycle $(e,f), e \neq f, \iota(e)=f$ in $\iota$  corresponds to the swap of the $u(e)$'s ball with color $v(e)$ and the $u(f)$'s ball with color $v(f)$. A fixed point corresponds to a ball that does not participate in a swap.
Thus, Conjecture \ref{BallsC} states that the girls can organize the swapping process so that, ultimately, each girl has exactly one ball of each color and every ball participates in at most one swap.

\medskip
\noindent
\textbf{Remark.}
Conjecture $\ref{BallsC}$ implies Conjecture \ref{WeakBallsC}. Indeed, for $M$ from Conjecture \ref{WeakBallsC}, consider any $(u,v) \in A_{n,k}$ such that $[u,v]'=M$ and $\iota \in I$, $[u \circ \iota, v]=W_{[n],[k]}$. Then $\sigma = (u \circ \iota) \times v$ is a permutation of $[n]\times[k]$, and $\theta = \sigma \circ\iota \circ \sigma^{-1}$ is a perfect matching in $M \otimes M^T$.

\medskip 

\begin{conjecture}\label{4PartsC}
Let $L_1,R_1,L_2,R_2$ be sets of vertices of sizes $n_1,k_1,n_2,k_2$, respectively with $n_1 k_2 = n_2 k_1$, let $(L_1,R_1,E_1,u_1,v_1)$ and $(L_2,R_2,E_2,u_2,v_2)$, where $|E_1|=|E_2|=n_1 k_2$ be biregular bipartite labeled multigraphs;
then there exists a bijection $\psi: E_1 \rightarrow E_2$ such that $[u_2 \circ \psi, v_1] = W_{L_2, R_1}$ and  $[u_1 \circ \psi^{-1}, v_2] = W_{L_1, R_2}$

\end{conjecture}

We have proved its corollary; see Theorem~\ref{Weak4PartsT}.

We also formulate one more conjecture that implies Conjecture \ref{WeakBallsC} (see Theorem~\ref{InequalityImpliesTutte}).

\begin{conjecture}\label{MatrixInequalityC}
Assume that $M$ is a matrix with non-negative real elements, $t$ is an odd positive integer, and $(x_1, y_1), \dots, (x_t, y_t)$ are distinct entries in $M$ such that $M_{x_i y_i} = 0$ for all $i=1,2,\dots,t$. Then 
$$
  \sum_{i=1}^{t} \sum_{j=1}^t M_{x_i y_j} M_{x_j y_i} \leq (t-1)\|M\|_{\infty}\|M\|_{1}.
$$
\end{conjecture} 

(Recall that $\|M\|_\infty$ is the maximum of $\ell^1$-norms of the rows
of $M$, and $\|M\|_1=\|M^T\|_\infty$ is the maximum of $\ell^1$-norms
of the columns of $M$).

\section{Main results}

In the following theorem, the tensor product has the usual meaning. 

\begin{theorem}\label{Weak4PartsT}
Let $n_1, k_1, n_2, k_2$ be positive integers with $n_1 k_2 = n_2 k_1$. Let $M_1$ be an $n_1 \times k_1$ matrix with all the row sums equal to $k_2$, and all the column sums equal to $n_2$; let $M_2$ be an $n_2 \times k_2$ matrix with all the row sums equal to $k_1$, and all the column sums equal to $n_1$. Consider a square binary matrix $M$ with the same nonzero entries as in $M_1 \otimes M_2^T$. Then the bipartite graph with the (bipartite) adjacency matrix $M$ has a perfect matching.
\end{theorem}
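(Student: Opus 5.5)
The plan is to show that $M_1\otimes M_2^T$ is, up to a positive scalar, a doubly stochastic matrix, and then to apply Hall's theorem (equivalently, the Birkhoff--von Neumann theorem) to its support, which is exactly the support of $M$. Throughout I assume, as in the rest of the paper, that $M_1$ and $M_2$ have non-negative entries. Without this the statement fails, since cancellations could destroy the support structure.

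First I fix the indexing. The rows of $N:=M_1\otimes M_2^T$ are indexed by pairs $(i,b)\in[n_1]\times[k_2]$, and its columns by pairs $(j,a)\in[k_1]\times[n_2]$. The entries are
$$N\bigl((i,b),(j,a)\bigr)=M_1(i,j)\,M_2(a,b).$$
The matrix is square because $n_1k_2=k_1n_2$.

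Next I compute the line sums. The row sum at $(i,b)$ factors as
$$\sum_{j}M_1(i,j)\cdot\sum_a M_2(a,b)=k_2\cdot n_1 .$$
The column sum at $(j,a)$ factors as
$$\sum_i M_1(i,j)\cdot\sum_b M_2(a,b)=n_2\cdot k_1 .$$
These two values coincide by hypothesis; call the common value $c=n_1k_2>0$.

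Finally I verify Hall's condition for the bipartite graph with adjacency matrix $M$, whose edges are exactly the positions where $N>0$. Take any set $S$ of rows and let $\Gamma(S)$ be its set of neighbouring columns. All the mass of the rows in $S$ lies in columns of $\Gamma(S)$. Since all entries are non-negative, this gives
$$c|S|=\sum_{r\in S}\sum_{\text{col}}N\le\sum_{\text{col}\in\Gamma(S)}\sum_{r}N=c|\Gamma(S)|,$$
so $|\Gamma(S)|\ge|S|$. Hall's theorem then yields a perfect matching.

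There is no real obstacle here. The only points needing care are getting the index convention of the usual tensor product right, so that the factorization of the line sums goes through, and using non-negativity in the Hall inequality.
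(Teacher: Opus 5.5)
Your proof is correct, and it takes a more direct route than the paper. The paper discards the weights at once and works only with supports. It defines the Hall ratio $h(G)=\min_{X}|N(X)|/|X|$ and proves that $h(G\otimes H)=h(G)h(H)$ for arbitrary bipartite graphs (Lemma~\ref{hIsMultiplicativeL}, via Hall's theorem for the blow-ups $G\otimes K_{p,q}$ in Lemma~\ref{TensorWithKL}). It then reads off $h(G)=k_2/n_2$ and $h(H)=n_1/k_1$ from the line-sum conditions, so that $h(G\otimes H)=1$.

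You instead keep the weights through the product. Line sums of a Kronecker product factor, so $M_1\otimes M_2^T$ is $n_1k_2$ times a doubly stochastic matrix, and a single double count of mass gives Hall's condition for its support. In effect you do, on the product, the same double counting that the paper does tacitly on each factor to get $h(G)\ge k_2/n_2$ and $h(H)\ge n_1/k_1$. Keeping the weights is what lets you skip the multiplicativity lemma and the $K_{p,q}$ trick, so your proof is shorter and self-contained.

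The paper's detour buys a general fact about arbitrary bipartite graphs, which it reuses in Theorem~\ref{WeakBallsAndHigginsAreEquiv}. Your argument covers that use as well, since the twisted product $M\otimes M^T$ there has all line sums equal to $nk$. Your remark that non-negativity of $M_1,M_2$ is essential is right. The paper also needs it, implicitly, when it asserts $h(G)=k_2/n_2$.
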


\begin{theorem}\label{BallsT}
Conjecture \ref{BallsC} holds for the case $k=mn+\eps, \eps \in \{ -2, -1, 0, 1, 2\}, m \in \mathbb{N}$
\end{theorem}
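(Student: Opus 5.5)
The plan is to induct on $m$, reducing the case $k$ to the case $k-n$. The base cases are then the values $k=\eps\in\{1,2\}$ (with $m=0$), together with the small boundary cases $k=n+\eps$ that the reduction cannot reach. Throughout I use the balls language of \cite{P}. A configuration $(u,v)\in A_{n,k}$ is the biregular multigraph $[u,v]\in B_{n,k}$: girls have degree $k$ and colors have degree $n$. We must find $\iota\in I$ after which every girl holds exactly one ball of each color.

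\emph{Base cases.} For $k=1$ there are $n$ balls of a single color and each girl holds one, so $\iota=\mathrm{id}$ works. For $k=2$ there are two colors, each with $n$ balls, and each girl holds two balls. A girl holding two balls of color $a$ and a girl holding two balls of color $b$ occur in equal numbers, so we pair them arbitrarily and swap one $a$-ball for one $b$-ball in each pair. The cases $k=n+\eps$ with $\eps\in\{-2,\dots,2\}$ I would treat directly. The idea is to view $[u,v]$ as an $n\times k$ matrix whose column sums all equal $n$, so that the total excess $\sum_{x,y}\max([u,v]_{xy}-1,0)$ and the total deficit (the number of zero entries) balance up to the small defect $\eps$. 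We then match each surplus ball to a deficient position by a Hall-type argument on the bipartite graph of pairs (surplus ball of girl $x$ of color $a$, surplus ball of girl $x'$ of color $b$) with $[u,v]_{xb}=0=[u,v]_{x'a}$. When $\eps=0$ this is exactly a perfect-matching statement, and the cases $\eps=\pm1,\pm2$ allow at most two ``defective'' rows or columns to be handled by hand.

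\emph{Reduction step.} Given $k=mn+\eps$ with $m\ge 1$ (and $k-n$ still in the admissible range), I want to split the ball set $E$ into two parts $E'\sqcup E''$ with the following properties:
\begin{itemize}
\item $E'$ consists of exactly $n$ balls of each of $n$ chosen colors $C$, i.e.\ all balls of the colors in $C$;
\item each girl holds exactly $n$ balls of $E'$.
\end{itemize}
Then $E''$ is a configuration in $A_{n,k-n}$ and $E'$ is a configuration in $A_{n,n}$. Since $n=1\cdot n+0$, the case $m=1,\eps=0$ applies to $E'$, and the inductive hypothesis applies to $E''$. The two involutions combine, as they have disjoint supports and disjoint color sets.

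The only requirement is that $C$ be chosen so that each girl holds exactly $n$ balls of colors in $C$, which cannot be guaranteed in general. So instead I would allow swaps across the two parts. First apply a partial involution that trades balls between girls so that every girl holds exactly $n$ balls of colors in $C$. This is a bipartite degree-correction problem: girls with surplus in $C$ trade a $C$-ball for a non-$C$-ball with girls having a deficit. Such trades should be chosen (by K\"onig's theorem on the regular bipartite multigraph $[u,v]$, decomposing it into $n$ ``color-perfect'' layers) so that each traded ball is not used again later. The cleanest version is to mark the traded balls as already fixed and choose $C$ together with the trades so that the traded ball lands in its final correct position, i.e.\ gives a girl a color she lacks.

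\emph{Main obstacle.} The delicate point is exactly this compatibility: each ball may be swapped at most once. So the correcting trades in the reduction step must simultaneously fix degrees and place balls at final positions, consistent with the later involution on $E'$ and $E''$. I would first try to choose $C$ greedily as the $n$ colors maximizing $\sum_x \min(\#\{C\text{-balls of }x\},n)$, and prove via Hall's condition that the needed trades exist. If that fails, I would instead strengthen the induction hypothesis to allow prescribed fixed balls, and this strengthening is presumably why only $|\eps|\le 2$ can be reached.
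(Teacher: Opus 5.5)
\textbf{There is a genuine gap.} Neither your reduction step nor your base cases $k=n+\eps$ is actually proved, and the mechanisms you sketch for them fail.

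\emph{Reduction step.} You rightly note that a set $C$ of $n$ original colors, with each girl holding exactly $n$ balls of $C$, need not exist. For example, take $n=3$, $k=6$, with three colors distributed $(2,1,0)$ among the girls and three distributed $(0,1,2)$. A choice of $C$ containing $s$ colors of the first kind gives girl~$1$ exactly $2s\ne 3$ balls. Your proposed repairs are degree-correcting trades that must simultaneously land balls in final positions, a greedy choice of $C$ ``proved via Hall'', or an unspecified strengthened hypothesis with prescribed fixed balls. These are a wish list, not an argument. This compatibility is the heart of the problem, not a technicality.

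\emph{Base case.} This fails as described too. For $n=k=3$, let the girls hold $\{a,a,b\}$, $\{b,b,c\}$, $\{c,c,a\}$. Your graph of pairs of surplus balls (girl $x$'s extra ball of color $p$ paired with girl $x'$'s extra ball of color $q$ when $[u,v]_{xq}=0=[u,v]_{x'p}$) has no edges at all. Yet the instance is solvable: swap girl~$1$'s only $b$ with a $c$ of girl~$3$, then an $a$ of girl~$1$ with a $b$ of girl~$2$. So swapping only surplus balls cannot work even for $\eps=0$. The cases $\eps=\pm1,\pm2$ ``by hand'' have no argument. Finally, nothing in your scheme explains why $|\eps|\le 2$ should matter.

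\textbf{The missing idea.} Split $E$ not by original colors but by an auxiliary equitable edge-coloring $w\colon E\to F$, $|F|=n$, of the multigraph $[u,v]$. Such a coloring comes from Euler-type $2$-colorings plus minimizing the sum of squares of colored degrees. Every original color's $n$ balls then get distinct labels, so $[w,v]=W_{F,[k]}$. Every girl gets $m$ or $m+\mathrm{sgn}(\eps)$ balls of each label.

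Then $[u,w]=mJ+\mathrm{sgn}(\eps)P$, where $J$ is the all-ones matrix and $P$ is the $0/1$ matrix of an $|\eps|$-regular bipartite graph. For $|\eps|\le 2$, this graph is empty, a perfect matching, or a disjoint union of even cycles. A $2r$-cycle is the bipartite double of a path on $r$ vertices with loops at both ends. Hence some bijection $\alpha\colon F\to[n]$ makes $[u,\alpha\circ w]$ symmetric.

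Now pair the balls of girl $x$ labeled $\alpha^{-1}(y)$ with those of girl $y$ labeled $\alpha^{-1}(x)$. This gives $\iota\in I$ with $u\circ\iota=\alpha\circ w$, so $[u\circ\iota,v]=\alpha W_{[n],[k]}=W_{[n],[k]}$.

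This treats all $m$ at once, since the $mJ$ part is automatically symmetric, and it needs no induction. The restriction $|\eps|\le 2$ is exactly the range where symmetrizability of $P$ is guaranteed.
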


Its analog for Conjecture \ref{4PartsC} is

\begin{theorem}\label{4PartsT}
Conjecture~\ref{4PartsC} holds for the case $n_1=n_2=n, k_1=k_2=k, k=mn+\eps, \eps \in \{ -2, -1, 0, 1, 2\}, m \in \mathbb{N}$ and also whenever $k_1$ is divisible by $n_1$.
\end{theorem}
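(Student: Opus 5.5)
We plan to recast Conjecture~\ref{4PartsC} as a pair of edge-colouring problems that only need to be compatible. A bijection $\psi:E_1\to E_2$ satisfies $[u_2\circ\psi,v_1]=W_{L_2,R_1}$ exactly when the sets $S_x=\psi^{-1}(u_2^{-1}(x))$, $x\in L_2$, partition $E_1$ into classes each containing exactly one edge at every vertex of $R_1$. Likewise, the second condition says that the sets $T_{x'}=\psi(u_1^{-1}(x'))$, $x'\in L_1$, partition $E_2$ into classes each containing one edge at every vertex of $R_2$. Thus we colour the edges of $G_1=[u_1,v_1]$ with colour set $L_2$ so that every vertex of $R_1$ is rainbow (its degree is $n_2$). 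We colour $G_2=[u_2,v_2]$ with colour set $L_1$ so that every vertex of $R_2$ is rainbow. Let $N_1(x',x)$ be the number of edges at $x'\in L_1$ coloured $x$, and $N_2(x,x')$ the analogous count at $x\in L_2$. If $N_1=N_2^T$, then for every pair $(x',x)$ we have $|u_1^{-1}(x')\cap S_x|=|u_2^{-1}(x)\cap T_{x'}|$. Choosing arbitrary bijections between these blocks and gluing them gives $\psi$. So the whole task is to produce colourings of both graphs with transposed count matrices. Both matrices are $n_1\times n_2$ with row sums $k_2$ and column sums $k_1$.

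\emph{Divisible case $k_1=mn_1$.} Then $k_2=mn_2$, and we aim for $N_1\equiv m$ and $N_2\equiv m$. Split every vertex of $L_1$ (of degree $mn_2$) into $m$ copies of degree $n_2$, distributing its edges arbitrarily. The resulting bipartite multigraph is $n_2$-regular, so by K\"onig's theorem it has a proper edge colouring with $n_2$ colours. Merging the copies back, each $R_1$-vertex is rainbow and each $L_1$-vertex sees every colour exactly $m$ times. The same argument applies to $G_2$, and we are done.

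\emph{Case $n_1=n_2=n$, $k=mn+\eps$.} The case $\eps=0$ is the divisible case. For $\eps\ne0$, split each left vertex of degree $k$ into $m$ copies of degree $n$ plus one vertex of degree $\eps$; for $\eps<0$, use $m-1$ copies of degree $n$ plus one of degree $n+\eps$. Then apply K\"onig (or de Werra's equitable colouring) to the resulting graph of maximum degree $n$. This gives $N_i=mJ+P_i$ for $\eps>0$, or $N_i=mJ-P_i$ for $\eps<0$, where $P_i$ is a $0$--$1$ matrix with all row and column sums $|\eps|$. Colours may be renamed freely on each side. This permutes the columns of $N_1$ (the $L_2$-colours) and the rows of $N_2^T$ (the $L_1$-colours). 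Hence it suffices to make $P_1$ and $P_2^T$ equivalent under a row permutation and a column permutation.

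For $|\eps|=1$ both $P_i$ are permutation matrices, so this is automatic. For $|\eps|=2$ each $P_i$ is the biadjacency matrix of a $2$-regular bipartite graph, i.e.\ a disjoint union of even cycles, and two such matrices are equivalent iff their cycle types agree. We therefore aim to normalise both to a single Hamiltonian $2n$-cycle.

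The main obstacle is this normalisation. Our plan is to use Kempe-type moves: swap two colours $a,b$ along an alternating $a/b$ path or cycle in $G_i$. This keeps every $R_i$-vertex rainbow and changes $P_i$ only locally, ideally by rerouting two cycles of $P_i$ into one. We must show that whenever $P_i$ has at least two cycles, some such swap exists that merges two of them without splitting others. Since each endpoint vertex of an $a/b$ swap has its $a$- and $b$-counts exchanged, a swap at a left vertex where exactly one of $a,b$ is in excess moves the excess entry of that row from column $a$ to column $b$. Choosing $a$ in one cycle and $b$ in another should then splice the cycles together. Iterating reduces the number of cycles to one. If the global swap argument is delicate, a fallback is to control the degree-$\eps$ (or degree-$(n+\eps)$) auxiliary vertices directly: prescribe a Hamiltonian pattern for them first, and extend to the rest via König applied to the remaining $n$-regular part. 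This requires a Hall-type check that the prescribed pattern extends.
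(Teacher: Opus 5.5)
Your reformulation is correct: colour $G_1$ by $L_2$ and $G_2$ by $L_1$ with every right vertex rainbow, require transposed count matrices up to renaming colours, and glue block bijections. The divisible case, the cases $\eps\in\{0,\pm1\}$, and the reduction of $|\eps|=2$ to making both exceptional graphs $P_1,P_2$ Hamiltonian $2n$-cycles are also correct. All of this coincides with the paper's argument; vertex splitting plus K\"onig is the standard proof of the equitable colouring in Lemma~\ref{Coloring}.

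The gap is that the normalisation to a single cycle is not proved, and it is the only genuinely nontrivial step (Lemma~\ref{makeHamiltonian} in the paper). ``Choosing $a$ in one cycle and $b$ in another should then splice the cycles together'' restates what must be shown. Your fallback also fails to close it: extending a prescribed pattern on the small copies is a precolouring-extension problem, which fails in general for bipartite edge colourings. You give no reason why a good pattern and splitting exist.

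What is missing concretely is the existence of the right alternating path. A left vertex carries about $m$ edges of each colour, so the $a/b$-subgraph is not a union of paths and cycles, and there is no canonical Kempe chain. You must choose an $a/b$-alternating path, and its endpoints matter. Take $\eps=2$. Your claim that endpoints get their $a$- and $b$-counts exchanged holds only if the path runs from a left vertex $s$ with counts $(m+1,m)$ to a left vertex $t$ with counts $(m,m+1)$. If it ends at a vertex with counts $(m,m)$, the swap yields $(m+1,m-1)$ and destroys the form $N_i=mJ+P_i$.

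Here is how the paper gets such a path:
\begin{itemize}
\item Take $a,b$ in different cycles of $P_i$, and $s$ adjacent to $a$ in $P_i$. Then $s$ is not adjacent to $b$ and has counts $(m+1,m)$.
\item Orient $a$-edges from left to right and $b$-edges from right to left. Every right vertex then has in-degree $=$ out-degree $=1$.
\item The set $S$ of vertices reachable from $s$ has no outgoing edges, so $\sum_{z\in S}(\mathrm{out}(z)-\mathrm{in}(z))\le 0$.
\item Since $s$ contributes $+1$, some left vertex $t\in S$ has more $b$- than $a$-edges, i.e.\ counts $(m,m+1)$. So $t$ is adjacent to $b$ but not to $a$ in $P_i$.
\item Swap colours along a directed path from $s$ to $t$. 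This replaces the entries $s$--$a$, $t$--$b$ of $P_i$ by $s$--$b$, $t$--$a$ and changes nothing else.
\item The two removed edges lie on different cycles, so this merges them and the number of cycles drops.
\end{itemize}
The case $\eps=-2$ is symmetric, working with deficits. With this lemma inserted, your proof is complete.
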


\begin {theorem}\label{InequalityImpliesTutte}
Conjecture $\ref{MatrixInequalityC}$ implies Conjecture $\ref{WeakBallsC}$.
\end {theorem}

\begin{theorem}\label{SquareMatrixT}
Conjecture \ref{MatrixInequalityC} holds for the case when all $x_i$ are distinct and all $y_i$ are distinct.
\end{theorem}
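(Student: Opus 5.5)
The plan is to pass to the $t\times t$ submatrix. Because all $x_i$ are distinct and all $y_i$ are distinct, set $A_{ij}:=M_{x_iy_j}$ for $i,j\in[t]$. Then $A$ is a nonnegative $t\times t$ matrix with zero diagonal, every row sum is at most $R:=\|M\|_\infty$, and every column sum is at most $C:=\|M\|_1$. The left-hand side equals $\Tr(A^2)=\sum_{i\ne j}A_{ij}A_{ji}$, so we must show
$$\sum_{i\neq j}A_{ij}A_{ji}\le (t-1)RC .$$

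The trivial bound is $tRC$. It comes from $\sum_j A_{ij}A_{ji}\le \bigl(\max_j A_{ji}\bigr)\sum_j A_{ij}\le CR$ for each $i$. So the task is to save one full $RC$, and oddness of $t$ must be used. Indeed, for even $t$ the matrix of a fixed-point-free involution with $R=C=1$ gives $t>t-1$. Note also that the natural ``uniform'' candidate $A=\frac{R}{t-1}(J-I)$ with $R=C$ gives only $\frac{t}{t-1}R^2$. This suggests that the extremal configurations are permutation-like, with entries equal to $0$ or saturating a row or column.

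\textbf{Step 1 (reduction to extreme configurations).} Split the off-diagonal pairs into the upper entries $X=(A_{ij})_{i<j}$ and the lower entries $Y=(A_{ji})_{i<j}$. The objective $2\sum_{i<j}X_{ij}Y_{ij}$ is linear in $X$ for fixed $Y$. The constraints on $X$ (nonnegativity, and row and column sums at most $R-(\text{contribution of }Y)$ and $C-(\dots)$) form a polytope. So I will move $X$ to a vertex of this polytope without decreasing the objective, then do the same for $Y$, and alternate. At a vertex of a transportation-type polytope the support is a forest, and the nonzero entries are determined by saturated row and column constraints. Iterating this, I expect to reach a maximizer whose support graph (edge $\{i,j\}$ when $A_{ij}A_{ji}>0$) has a very restricted structure: essentially a disjoint union of small pieces in which every row or column involved is saturated.

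\textbf{Step 2 (the combinatorial count).} For such an extremal $A$, attach to each vertex $i$ the weight $w_i:=\sum_j A_{ij}A_{ji}/(RC)\le 1$. Then show that $w_i=1$ forces column $i$ to be concentrated in a single entry $A_{f(i)i}=C$, with row $i$ supported where the reciprocal entries live. From this, $w_i=1$ for all $i$ would force $f$ to be a fixed-point-free involution on $[t]$, which is impossible for odd $t$. More quantitatively, I aim to show that along the pieces of the support forest the deficits $1-w_i$ add up to at least $1$ whenever a piece has an odd number of vertices. At least one piece is odd, since $t$ is odd. This is the analogue of the odd-set constraint in the matching polytope, and it is where the parity hypothesis enters.

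\textbf{Main obstacle.} The hardest step is Step 1 together with the quantitative deficit in Step 2 when $R\ne C$. In that case $A$ cannot be normalized to a doubly stochastic matrix, so Birkhoff--von Neumann does not apply directly. If the alternating-vertex argument is too weak to pin down the structure, my first fallback is a direct local estimate. Choose the vertex $i$ minimizing $\max_j A_{ji}/C$. Then bound $\sum_j A_{ij}A_{ji}$ and $\sum_j A_{ji}A_{ij}$ by AM--GM against the row-$i$ and column-$i$ budgets. Finally, induct on $t$ by deleting two vertices $i$ and $f(i)$ that form a saturated reciprocal pair. Each such pair contributes at most $2RC$ and reduces $t$ by $2$, preserving oddness, and the base case $t=1$ gives $0$.
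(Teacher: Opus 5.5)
\textbf{There is a genuine gap: the proposal never establishes the saving of one $RC$.} Step~1 is stated as an expectation. Step~2's claim that the deficits $1-w_i$ sum to at least $1$ is just the theorem restated, since $RC\sum_i w_i$ is exactly the left-hand side. Your observation that $w_i=1$ cannot hold for all $i$ when $t$ is odd gives only the strict bound $<tRC$.

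The mechanism you propose for Step~1 also has a structural problem. Splitting into upper entries $X$ and lower entries $Y$ does not decouple the constraints, because every row budget and every column budget is shared between $X$ and $Y$. Moving $Y$ to a vertex of its polytope changes the polytope of $X$. So nothing guarantees a point that is extremal in both, and the claimed support forest with saturated rows and columns is unjustified.

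The fallback induction fails as well. The terms of $\sum_{a\ne b}A_{ab}A_{ba}$ touching $\{i,f(i)\}$ total $2\sum_j A_{ij}A_{ji}+2\sum_j A_{f(i)j}A_{jf(i)}-2A_{if(i)}A_{f(i)i}$. This is at most $2RC$ only if $A_{if(i)}A_{f(i)i}=RC$. Such a saturated pair need not exist for a general $A$. When $R\ne C$ it cannot exist at all, since both entries are at most $\min(R,C)$.

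\textbf{The missing idea is that $R\ne C$ is the easy case, not the obstacle.} The left-hand side is unchanged when $M$ is transposed, so you may assume $R\le C$. Then discard the column constraints entirely and prove $\sum_{i\ne j}A_{ij}A_{ji}\le (t-1)R^2$. With only row constraints, the problem decouples by rows rather than by triangles:
\begin{itemize}
\item Since $A_{ii}=0$, the objective is affine in the entries of any single row when the other rows are fixed.
\item That row's feasible set is the simplex $\{A_{ij}\ge 0,\ \sum_j A_{ij}\le R\}$, independent of all other rows.
\item So you can replace row~$1$ by a vertex ($0$ or $R$ times a unit vector) without decreasing the objective, then row~$2$, and so on. Earlier rows stay feasible throughout.
\item At the end each row $i$ has at most one nonzero entry, equal to $R$ at position $f(i)$.
\item The objective is then $2R^2$ times the number of pairs with $f(i)=j$ and $f(j)=i$. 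These pairs form a matching on $t$ points, so there are at most $(t-1)/2$ of them. This is exactly where oddness enters.
\end{itemize}
This is the paper's combinatorial proof; its probabilistic proof is the same count taken in expectation.

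If you prefer to use both budgets, the paper's spectral route works. Let $B_{ij}=\sqrt{A_{ij}A_{ji}}$; this matrix is symmetric with zero diagonal, and $\Tr B^2$ equals the left-hand side. By Cauchy--Schwarz its row sums are at most $\sqrt{RC}$, so its eigenvalues lie in $[-\sqrt{RC},\sqrt{RC}]$ and sum to $0$. For odd $t$, any such vector satisfies $\sum\lambda_i^2\le (t-1)RC$.
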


\section{Proof of the main results}

\subsection{Proof of Theorem~\ref{Weak4PartsT}}

In this section, all bipartite graphs are assumed to have nonempty left and right parts.

We can naturally transfer the notion of the tensor product to bipartite (multi)graphs from their bipartite adjacency matrices.

\begin{definition}
\label{h(G)}
Consider the bipartite graph $G$ with parts $(L,R)$. For $X \subseteq L$, by $N(X)$ we denote the union of the neighborhoods of the vertices of $X$. Define $h(G)$ as
$$
  h(G)=\min_{\varnothing \neq X \subseteq L} \frac{|N(X)|}{|X|}.
$$
\end {definition}

\begin {lemma} \label {TensorWithKL}
Let $p,q$ be positive integers, and $G$ be a bipartite graph. Then the following conditions are equivalent:

(1) $h(G) \geq \frac{p}{q}$;

(2) In the graph $G \otimes K_{p,q}$, there exists a matching that covers its left part.
\end {lemma}

\begin {proof}
$(1) \Rightarrow (2)$. Assuming (1), we will prove that the condition of Hall's lemma is satisfied for $G \otimes K_{p,q}$. Denote the left part of $G$ by $L$.

Consider $\varnothing \neq X \subseteq L \times [p]$. Let $X'$ be the image of $X$ in the projection of $L \times [p]$ onto $L$. Then $|X'| \geq \frac{|X|}{p}$. By (1) and Definition~\ref{h(G)}, we have
$$
  \frac{|N(X')|}{|X'|} \geq h(G) \geq \frac{p}{q}.
$$  

Thus $|N(X)| = |N(X') \times [q]| = q|N(X')| \geq p|X'| \geq |X|$, as desired.

$(2)\Rightarrow(1)$. Consider any $\varnothing \neq X \subseteq L$ and put $X' = X \times [p]$. Since there exists a matching in $G \otimes K_{p,q}$ covering its left part, we have $q|N(X)| = |N(X')| \geq |X'| = p|X|$. So, $\frac{|N(X)|}{|X|} \geq \frac{p}{q}$. Since this holds for any nonempty $X \subseteq L$, it follows that $h(G) \geq \frac{p}{q}$.
\end {proof}

\begin {lemma} \label {hIsMultiplicativeL}
For any bipartite graphs $G$ and $H$, $h(G \otimes H) = h(G)h(H)$.
\end {lemma}

\begin {proof} 
We start by proving that $h(G \otimes H) \leq h(G)h(H)$. Let $X$ and $Y$ be subsets of the left parts in~$G$ and~$H$ such that $h(G)=\frac{|N(X)|}{|X|}$ and $h(H)=\frac{|N(Y)|}{|Y|}$. Using $N(X \times Y) = N(X) \times N(Y)$, we see that
$$
  h(G \otimes H) \leq \frac{|N(X \times Y)|}{|X \times Y|} = \frac{|N(X)|}{|X|}\cdot \frac{|N(Y)|}{|Y|} = h(G)h(H).
$$

It remains to show that $h(G \otimes H) \geq h(G)h(H)$. Let $h(G) = \frac{a}{b}$ and $h(H) = \frac{c}{d}$. By Lemma \ref{TensorWithKL}, $G \otimes K_{a,b}$ and $H \otimes K_{c,d}$ have matchings covering their left parts. Then $(G \otimes K_{a,b}) \otimes (H \otimes K_{c,d}) \cong (G \otimes H) \otimes (K_{a,b} \otimes K_{c,d}) \cong (G \otimes H) \otimes K_{ac, bd}$ also has a matching that covers the left part. Using Lemma~\ref{TensorWithKL}, we see that $h(G \otimes H) \geq \frac{ac}{bd} = h(G)h(H)$.
\end {proof}

\begin{proof}[Proof of Theorem \ref{Weak4PartsT}]
Let $G$ and $H$ be the bipartite graphs with bipartite adjacency matrices $M_1$ and~$M_2^T$, respectively. The conditions on the row and column sums yield $h(G) = \frac{k_2}{n_2}$ and $h(H) = \frac{n_1}{k_1} = \frac{n_2}{k_2}$. Thus, by lemma $\ref{hIsMultiplicativeL}$, we have $h(G \otimes H) = 1$. Therefore, there is a perfect matching in $G \otimes H$.
\end {proof}

Consider $u_0:[n]\times[k] \rightarrow [n], u_0((i,j))=i$ and $v_0 : [n] \times[k] \rightarrow [k], v_0((i,j))=j$; 

\begin {theorem} \label{WeakBallsAndHigginsAreEquiv}
  Conjectures \ref{HigginsC} and \ref{WeakBallsC} are equivalent.
\end {theorem}

\begin {proof}

$(1) \Rightarrow (2)$ Assuming Conjecture \ref{HigginsC}, it suffices to check that the bipartite graph with the adjacency matrix $M \otimes M^T$ has a perfect matching. By lemma \ref {hIsMultiplicativeL}, $h(M \otimes M^T) = h(M)h(M^T) = \frac{k}{n} \cdot \frac{n}{k} = 1$.

$(2) \Rightarrow (1)$
 Let $M$ be an $n \times k$ binary matrix such that the bipartite graph with the bipartite adjacency matrix $M \otimes M^T$ has a perfect matching $\theta$ (recall that $\theta$ is an involution on $[n] \times [k]$). Consider $M' = [u_0 \circ \theta, v_0]'$. All non-zero entries in matrix $M'$ are non-zero in $M$. Conjecture \ref{WeakBallsC} implies that the ordinary multigraph with adjacency matrix $M' \otimes M'^T$ has a perfect matching; so does $M \otimes M^T$.

\end {proof}

\subsection{Results related to Conjecture \ref{BallsC}}

We start with the proof of Theorem \ref{BallsT} for the case $n=k$ because it's clearer. We explain it in the girls-and-balls language. Recall that K\"onig's theorem states that every regular bipartite multigraph (not necessarily a graph) with positive degrees has a perfect matching. (see \cite{K})

Consider the $n\times n$ board with rows corresponding to the girls.
By K\"onig's theorem, there exist balls
$Ball_1,\ldots,Ball_n$ held by girls 1,2,\ldots, $n$, respectively, of mutually different colors. Put these $n$ balls in the corresponding cells of
the first column of our board. Removing these $n$ balls, we still have a regular
multigraph (with degree $n-1$ for every girl and every color). Do the same and fill
the second column of our board. Proceeding in this way, we fill the board with the balls. Note that the transposition of the board corresponds to a series of disjoint swaps (and exactly $n$ balls, one per girl, remain unswapped). In the end, the $i$-th girl has the balls that were placed in the $i$-th column of our board; thus, all are of different colors. 
\qed

\begin{lemma} \label{Coloring}
For any bipartite multigraph $G$ and any positive integer $m$, we can color the edges of this multigraph in $m$ colors such that for every vertex and for every two colors, the degrees of this vertex with respect to these two colors differ by at most $1$.
\end{lemma}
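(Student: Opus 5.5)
We prove Lemma~\ref{Coloring} by reducing to the regular case and applying K\"onig's theorem, in the form that a bipartite multigraph of maximum degree $\Delta$ has a proper edge coloring with $\Delta$ colors. This form follows from the statement recalled above: embed the multigraph into a $\Delta$-regular bipartite multigraph, then peel off perfect matchings one at a time.

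The construction is a splitting of vertices. For each vertex $w$ of degree $d$, write $d = qm + r$ with $0 \le r < m$. Partition the edges at $w$ arbitrarily into $q$ groups of size $m$ and, if $r>0$, one further group of size $r$. Replace $w$ by one new vertex per group, and attach to each new vertex the edges of its group. Each edge keeps its two endpoints' groups, so we obtain a bipartite multigraph $G'$ on the same edge set. Its left and right sides are the splits of the original sides, so bipartiteness is preserved. Every vertex of $G'$ has degree at most $m$.

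By K\"onig's edge-coloring theorem, $G'$ has a proper edge coloring in $m$ colors. Read it as a coloring of the edges of $G$. Each full group of size $m$ at $w$ uses every color exactly once. The remainder group uses $r$ distinct colors, each once. Hence at $w$ every color has degree $q$ or $q+1$, so any two colors' degrees differ by at most $1$.

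The only point needing care is the edge-coloring form of K\"onig's theorem for multigraphs with maximum degree $m$, rather than for regular ones. It goes as follows. If the two sides have different sizes, add isolated vertices to the smaller side. Then repeatedly add an edge between a left vertex and a right vertex that both have degree less than $m$; such a pair exists while the graph is not $m$-regular, because the total degrees on the two sides are equal. The result is $m$-regular, and for $m \ge 1$ K\"onig's theorem gives a perfect matching. Remove it as one color class and repeat with degree $m-1$. Loops cannot arise because the graph is bipartite.

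The case $m=1$ is trivial.
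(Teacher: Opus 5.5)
Your proof is correct, but it takes a different route from the paper's.

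\textbf{The paper's route.} It first settles the two-color case by an Euler-type argument. It adds one auxiliary vertex to each side to make all degrees even, decomposes into even cycles, and colors each cycle alternately. It then handles $m$ colors extremally: it takes a coloring minimizing the sum of squares of all colored degrees. If some vertex were unbalanced in two colors, rebalancing the subgraph of those two colors by the two-color case would strictly lower this cost, a contradiction.

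\textbf{Your route.} You split each vertex of degree $qm+r$ into $q$ vertices of degree $m$ and one of degree $r$. You then apply K\"onig's edge-coloring theorem to the resulting bipartite multigraph of maximum degree $m$. Your regularization step is sound. Equalizing the sides and then joining a deficient left vertex to a deficient right vertex always works, because such vertices exist in pairs by the degree-sum count. This correctly derives the edge-coloring form from the perfect-matching form of K\"onig's theorem that the paper recalls.

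\textbf{Comparison.} Your argument is a direct one-shot construction rather than an extremal one. It makes the degree profile at each vertex explicit: $r$ colors of degree $q+1$ and $m-r$ of degree $q$. It also relies only on a theorem the paper already invokes elsewhere. The paper's argument is entirely self-contained, using only parity and cycle decomposition, with no Hall or K\"onig. Its scheme of balancing two colors at a time is also close in spirit to the two-color path-flipping argument used later in Lemma~\ref{makeHamiltonian}.
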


\begin{proof} We start by proving the lemma statement for the case $m=2$. Call these two colors Blue and Red. Add one new vertex to each part of $G$ and connect them to some old vertices and possibly to each other to make all degrees even. Now the edges of the graph can be divided into even cycles. Let us traverse them, coloring the edges alternately blue and red; we obtain the coloring of all the edges of the new graph such that each vertex has equal red and blue degrees.
By removing new edges and new vertices, we obtain a coloring that satisfies the condition.

For an arbitrary coloring of the edges of~$G$, define its \emph{cost} as the sum of the squares of the colored degrees over all the vertices over all colors. The sum of squares of the two colored degrees of a certain vertex is minimized if and only if those degrees differ by at most~$1$. Therefore, the minimum value of the cost is achieved exactly when the coloring satisfies the requirements of the Lemma.

Now we proceed with the general case of an arbitrary $m$. Choose a coloring of minimum cost. Suppose that for some two colors, say Red and Blue, there is a vertex whose red and blue degrees differ by more than 1. Consider the subgraph containing only red and blue edges; as we have shown, its edges can be recolored in red and blue so that the blue and red degrees of each vertex differ by at most $1$. As mentioned above, this decreases the cost of the whole coloring, which leads to a contradiction. Therefore, the chosen coloring satisfies the requirements.
\end{proof}

\begin{lemma} \label{windFromColoring}
If $(L,R,E,u,v)$ is a biregular bipartite labeled multigraph, $|L|=n,|R|=k,|E|=nk,k=mn+\eps, |\eps| < n$, $F$ is a set of size $n$, there exists a coloring $w : E \rightarrow F$ such that $[w,v]=W_{F,R}$ and all weights in $[u,w]$ are equal to $m$ or $m+{\rm sgn}\,(\eps)$.
\end{lemma}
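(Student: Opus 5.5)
We apply Lemma~\ref{Coloring} with $n$ colors, but to an auxiliary tripartite structure that we flatten into one bipartite multigraph. The coloring $w$ has to meet two kinds of constraints at once. First, for every color $r\in R$, the $n$ edges $e$ with $v(e)=r$ must receive pairwise distinct labels in $F$; since $|F|=n$, each label is used exactly once, which is $[w,v]=W_{F,R}$. Second, for every girl $x\in L$, her $k$ edges must be spread as evenly as possible over the $n$ labels.

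To encode both, take the bipartite multigraph $H$ with left part $L$ and right part $R$, whose edges are the elements of $E$, with $e$ joining $u(e)$ and $v(e)$. (If $L$ and $R$ intersect, take disjoint copies; only the incidences matter.) Apply Lemma~\ref{Coloring} to $H$ with $m'=n$ colors identified with $F$, and call the resulting coloring $w$.

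Check the right side. Each vertex $r\in R$ has degree $n$ in $H$, because the multigraph is biregular with $|E|=nk$, so column sums are $n$. The $n$ colored degrees at $r$ pairwise differ by at most $1$ and sum to $n$. Hence they are all equal to $1$, which gives $[w,v]=W_{F,R}$.

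Check the left side. Each $x\in L$ has degree $k=mn+\eps$. Its $n$ colored degrees pairwise differ by at most $1$ and sum to $mn+\eps$. So they take values $\lfloor k/n\rfloor$ or $\lceil k/n\rceil$. Since $|\eps|<n$, there are three cases:
\begin{itemize}
\item if $\eps>0$, these values are $m$ and $m+1$;
\item if $\eps<0$, they are $m-1$ and $m$;
\item if $\eps=0$, all colored degrees equal $m$.
\end{itemize}
In every case the weights of $[u,w]$, which are exactly these colored degrees, lie in $\{m, m+\mathrm{sgn}(\eps)\}$.

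There is no real obstacle, since Lemma~\ref{Coloring} does all the work. The one point to get right is to color $H$ itself, where both $L$ and $R$ are vertex sets. The balance condition at the right vertices then forces exact uniqueness of each label per color, and the balance condition at the left vertices gives the near-uniform weights. This uses only the fact that each $r\in R$ has degree exactly $|F|=n$.
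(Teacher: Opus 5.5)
Your proposal is correct and is essentially the paper's proof. You apply Lemma~\ref{Coloring} to the multigraph itself, with the $n$ colors identified with $F$, so the degree-$n$ vertices of $R$ are forced to have all colored degrees equal to $1$, giving $[w,v]=W_{F,R}$; your explicit check that the left colored degrees are $\lfloor k/n\rfloor$ or $\lceil k/n\rceil$, hence lie in $\{m, m+\mathrm{sgn}(\eps)\}$, fills in a step the paper leaves implicit.
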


\begin {proof}
We can use $F$ as the set of colors for Lemma \ref{Coloring} to obtain a coloring function $w: E \rightarrow F$; The weights of the edges in $[v,w]$ correspond to the colored degrees in this coloring for the vertices of $R$. All vertices in~$R$ have all colored degrees equal to $1$, so $[v,w]=W_{F,R}$.
\end {proof}

\textbf{Remark.}
When applying Lemma \ref{windFromColoring}, we call \emph{exceptional} the edges of $[u,w]$ with weight $m+{\rm sgn}\,(\eps)$. The bipartite graph of exceptional edges is $|\eps|$-regular.














\begin {proof}[Proof of Theorem \ref{BallsT}.]
Take any coloring $w : [n] \times [k] \rightarrow [n]$ from Lemma \ref{windFromColoring}. There exists an involution $\iota$ on $[n] \times [k]$ and $\alpha \in S_n$ such that $u \circ \iota = \alpha \circ w$. Thus, $[u \circ \iota, v] = [\alpha \circ w, v] = \alpha [w,v] = \alpha W_{[n],[k]} = W_{[n],[k]}$.
\end {proof}

For $n=6$, the only remaining case is considered in the following theorem.

In a bipartite multigraph with left part of size $6$ and right part of size $6m+3$, where all vertices in the left part have multidegree $2m+1$ and all vertices in the right part have multidegree $2$, we say that two distinct vertices $u$ and $v$ from the left part are \emph{complementary} if $|N(\{u,v\})| = 2m+1$. Note that each vertex in the left part has at most one complementary vertex.

\begin{lemma}\label{find3}
For a bipartite multigraph as described above, let $u$, $v$ and $f$ be three vertices from its left part such that $u$ and $v$ are not complementary. Then there exists a vertex $w$ in the left part, $w \notin \{u,v,f\}$, such that $|N(\{u,v,w\})| \ge 3m+3$ and no two vertices among $\{u,v,w\}$ are complementary.
\end{lemma}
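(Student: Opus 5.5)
The plan is to count with neighborhoods and use inclusion--exclusion, exploiting that every right vertex has multidegree $2$. Let $R$ be the right part, $|R| = 6m+3$, and write $N(x)$ for the set of neighbors of $x$. A right vertex of multidegree $2$ is either joined twice to one left vertex or joined once to each of two distinct left vertices. Hence $|N(x)| \le 2m+1$ for every left vertex, with equality exactly when $x$ has no double edges.

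For distinct $x,y$ put $c(x,y) = |N(x)\cap N(y)|$, the number of right vertices adjacent to both. Then $|N(\{x,y\})| = |N(x)|+|N(y)| - c(x,y)$. Since $x$ and $y$ together carry $4m+2$ edge-ends, $|N(\{x,y\})| = 2m+1$ forces every right vertex in $N(\{x,y\})$ to have both of its edges inside $\{x,y\}$. This is exactly why complementarity is a matching relation.

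\textbf{Step 1: the counting identity.} For a triple $\{u,v,w\}$ the set $N(\{u,v,w\})$ has size $\sum |N(\cdot)| - \sum c(\cdot,\cdot)$. No triple intersections occur, because a right vertex meets at most two left vertices. I will use the following accounting. Let $d$ be the number of right vertices with both edges in $\{u,v\}$. Then $|N(\{u,v\})| = 4m+2-d$, and exactly $2d$ of the $4m+2$ edge-ends at $u,v$ go to those $d$ vertices. The remaining $4m+2-2d$ edges go to right vertices whose other edge lands at one of the three left vertices outside $\{u,v,f\}$, or at $f$. Adding $w$ contributes the right vertices adjacent to $w$ but not to $u$ or $v$. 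So
\[
|N(\{u,v,w\})| = 4m+2-d + \#\{\text{right vertices adjacent to } w \text{ but not to } u,v\}.
\]

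\textbf{Step 2: averaging over $w$.} I will average this over the candidates $w \notin \{u,v,f\}$. There are three of them, and possibly also $f$ as a fourth when considering all $w \ne u,v$. The total number of right vertices adjacent to neither $u$ nor $v$ is $6m+3-(4m+2-d) = 2m+1+d$. Each such vertex has its two edge-ends among the other four left vertices, which gives $4m+2+2d$ ends. The number of distinct vertices $w'\ne u,v$ adjacent to a given such right vertex is at least $1$ and at most $2$. Summing over the four candidates, the gain count is at least $2m+1+d$ (and more precisely: the ends total $4m+2+2d$, minus double edges).

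Since $u,v$ are not complementary, $d \le 2m$, so $|N(\{u,v\})| \ge 2m+2$. I need a gain of at least $m+1+d$ for some $w$ among three candidates, after excluding $f$ and any $w$ complementary to $u$ or to $v$. That leaves at least one candidate, and typically more.

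\textbf{Step 3: the main obstacle.} The hard part is the case analysis forced by excluding $f$ and the complements $u', v'$ of $u$ and $v$, which can leave only one admissible $w$. I would first use the fact that a complementary pair $\{u,u'\}$ has $N(u') = R$-part exactly matching $N(u)$ and is closed: the right vertices of $N(u)\cup N(u')$ have no edges elsewhere. This strongly constrains where the remaining edges go. In particular, if $u'$ exists, then the other left vertices see none of $N(u)$. Then the $2m+1+d$ right vertices outside $N(\{u,v\})$ are concentrated on few vertices, and a pigeonhole bound gives the remaining admissible $w$ a large gain. Concretely, I would split into cases by how many of $u', v'$ exist and whether they coincide with $f$. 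In each case I would bound the gain of the surviving candidate from below by $m+1+d$, using that each left vertex has total degree $2m+1$ and that the vertices outside $\{u,v\}$ must absorb all $4m+2+2d$ remaining edge-ends.

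I would also check directly that the chosen $w$ is not complementary to $u$ or $v$; $u,v$ are non-complementary by hypothesis. The delicate subcase, which I expect to be the main obstacle, is when $d$ is close to $2m$ and the complements exhaust the candidates. There I would show that $d = 2m$ together with a complement of $u$ forces a contradiction with the degree count of the right vertices.
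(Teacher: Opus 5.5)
Your Step 1 bookkeeping is correct, but Step 3, which is where the lemma is actually proved, is never carried out. The plan for it also rests on a miscalculation and a false claim.

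\textbf{The target is wrong.} Since $|N(\{u,v\})|=4m+2-d$, the requirement $|N(\{u,v,w\})|\ge 3m+3$ means a gain $g_w\ge d-m+1$, not $m+1+d$. The target you state is unattainable in general: $g_w\le |N(w)|\le 2m+1<m+1+d$ as soon as $d>m$.

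\textbf{The averaging is too weak.} Even with the corrected target, Step 2 only gives total gain $\ge 2m+1+d$ over the four other vertices. That does not rule out three bad candidates once $2d\ge 3m$, so it cannot survive the removal of $f$.

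\textbf{The ``delicate subcase'' claim is false.} $d=2m$ together with a complement of $u$ is not contradictory. For $m=1$:
\begin{itemize}
\item let $u$ have a double edge to $r_1$ and a single edge to $r_0$;
\item let $u'$ have a double edge to $r_2$ and a single edge to $r_0$;
\item let $v$ have a double edge to $r_3$ and a single edge to $r_4$;
\item let the other three left vertices have neighbourhoods $\{r_4,r_5,r_6\}$, $\{r_5,r_7,r_8\}$, $\{r_6,r_7,r_8\}$.
\end{itemize}
Then $d=2=2m$ and $u,u'$ are complementary. The example also shows that $N(u')\neq N(u)$ in general; only the closedness of $N(\{u,u'\})$ holds.

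\textbf{What is missing.} You need to count \emph{edges} into $N(\{u,v\})$ rather than distinct neighbours, and combine this with closedness of complementary pairs. Put $l=|N(\{u,v\})|\ge 2m+2$. Call $w$ bad if it sends at least $2l-4m-3$ edges into $N(\{u,v\})$. Otherwise $w$ sends at least $6m+5-2l$ edges outside $N(\{u,v\})$, so it has at least $3m+3-l$ new neighbours, and $w$ works.
\begin{itemize}
\item \emph{Neither $u$ nor $v$ has a complement.} The four vertices other than $u,v$ send exactly $2l-4m-2$ edges into $N(\{u,v\})$. Since $3(2l-4m-3)>2l-4m-2$, at most two of them are bad, so some $w\ne f$ works.
\item \emph{Both have complements $u',v'$.} Any $w\notin\{u,v,f,u',v'\}$ has $N(u),N(v),N(w)$ pairwise disjoint, giving $|N(\{u,v,w\})|\ge 3(m+1)$.
\item \emph{Only $u$ has a complement $u'$.} The three vertices outside $\{u,v,u'\}$ send nothing into $N(u)$ and at most $2|N(v)|-(2m+1)$ edges into $N(v)$. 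Since $l\ge m+1+|N(v)|$ and $|N(v)|\ge m+1$, two of them cannot both be bad, so one of them other than $f$ works.
\end{itemize}
This is the case analysis of the paper's proof, which additionally shortcuts the case of a complementary pair among the four other vertices. It is exactly what your Step 3 leaves undone.
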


\begin{proof}
Let $l = |N(\{u,v\})|$. We consider several cases.

\emph{Case 1:} There exist complementary vertices among the four vertices of the left part different from $u$ and $v$ (i.e., among the set consisting of $f$ and the three other vertices). Choose such a complementary pair and let $w$ be the vertex of that pair that is not $f$. Then $w$ is not complementary to $u$ or $v$ because each vertex has at most one complementary vertex. Moreover,
\[
|N(\{u,v,w\})| = |N(\{u,v\})| + |N(w)| \ge (2m+2) + (m+1) = 3m+3.
\]

\emph{Case 2:} Both $u$ and $v$ have complementary vertices (say $u'$ and $v'$, respectively). Then we can choose $w$ to be any vertex distinct from $u,v,f$ that is not complementary to $u$ or $v$ (such a vertex exists because there are six vertices in total and at most two complements). Then
\[
|N(\{u,v,w\})| \ge |N(u)| + |N(v)| + |N(w)| \ge (m+1) + (m+1) + (m+1) = 3m+3.
\]

\emph{Case 3:} Only one of $u$ and $v$, say $u$, has a complementary vertex. Then we can take two different vertices $w_1$ and $w_2$ from the left part, different from $u,v,f$, that are not complementary to $u$. Note that $l = |N(\{u,v\})| \ge (m+1) + |N(v)|$. The total number of edges from $w_1$ and $w_2$ to $N(\{u,v\})$ is at most $2|N(v)| - (2m+1) < 2(2|N(v)| - 2m - 1) \le 2(2l - 4m - 3)$. Hence for at least one of $w_1$ and $w_2$, say $w_1$, the number of edges to $N(\{u,v\})$ is less than $2l - 4m - 3$, so the number of edges to the remaining vertices of the right part is greater than $6m+4 - 2l$. Consequently, $|N(\{u,v,w_1\})| \ge 3m+3$.

\emph{Case 4:} Neither $u$ nor $v$ has a complementary vertex. Observe that there are at most two vertices $s$ different from $u$ and $v$ such that $|N(\{u,v,s\})| < 3m+3$. Therefore we can choose $w$ among the remaining vertices (excluding $f$) satisfying the required condition.
\end{proof}

\begin{theorem}
Conjecture \ref{BallsC} holds for the case $n=6$, $k = 6m+3$.
\end{theorem}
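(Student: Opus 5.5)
The plan is to follow the scheme of the proof of Theorem~\ref{BallsT}: reduce to finding a coloring $w$ of the balls together with a rearrangement $\alpha$ of the girls such that $[u,\alpha\circ w]$ is a \emph{symmetric} matrix and $[w,v]=W_{[6],[k]}$. Symmetry is exactly what is needed, since then the balls of girl $i$ in class $j$ can be paired one-to-one with the balls of girl $j$ in class $i$. This pairing defines an involution $\iota$ with $u\circ\iota=\alpha\circ w$, and the conclusion follows as in Theorem~\ref{BallsT}. Lemma~\ref{windFromColoring} cannot be applied directly, since $\eps=3$ is too large. Instead, I first use Lemma~\ref{Coloring} with $3$ colors on the ball multigraph, in which girls have degree $6m+3$ and colors have degree $6$. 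Every color vertex then gets exactly $2$ balls of each of the three groups. Every girl gets degrees differing by at most $1$ and summing to $6m+3$, so each group has degree exactly $2m+1$ at every girl. Thus each of the three groups $G_1,G_2,G_3$ is a bipartite multigraph of precisely the type in Lemma~\ref{find3}: left part of size $6$ with degrees $2m+1$, and right part of size $6m+3$ with degrees $2$.

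Next, inside each group $G_t$, I refine the coloring to $w\colon E\to F=[6]$ so that every color has one ball in each class. I would build $w$ on $G_t$ by splitting the six girls into two triples $T_t,T'_t$. Each of the $6m+3$ colors then sends its two group-$t$ balls to two different classes. The aim is to choose the triples so that the class-assignment can be made with row sums and pair structure compatible with symmetry. To obtain the triples, I take a non-complementary pair $\{u,v\}$ and apply Lemma~\ref{find3} to get $w$ with $|N(\{u,v,w\})|\ge 3m+3$ and no complementary pair inside the triple. The extra vertex $f$ is used to forbid a vertex already committed in another group, so that the three groups' triple-partitions interlock correctly. I then check by a Hall-type count that the complementary triple also has a large neighborhood. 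This holds because complementary pairs are excluded and each vertex has at most one complement, which gives $|N|\ge 3m+3$ for it as well.

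Given such triples, the bound $|N(T)|\ge 3m+3$ together with degree-$2$ colors lets me orient each color's two balls. Either both stay within a triple on distinct girls, or they are split across the triples. The resulting $6\times 6$ block $[u,w]$ restricted to group $t$ then has entries in $\{m,m+1\}$ (roughly), with its excess supported on a prescribed pattern between $T_t$ and $T'_t$. This step is a Kőnig/Hall argument on the bipartite graph between colors and the six classes, in the spirit of the $n=k$ proof.

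The main obstacle, I expect, is the final gluing. The three $6\times 6$ contributions must sum to a matrix with constant row and column sums $6m+3$ whose exceptional entries form a $3$-regular pattern that can be made symmetric by a single relabeling $\alpha$ of the classes. A generic $3$-regular bipartite pattern on $6+6$ vertices is not a permutation of a symmetric one, and this is exactly why $\eps=3$ fails in general. My first attempt would be to arrange that group $t$ contributes a perfect matching of exceptional entries pairing $T_t$ with $T'_t$ in a fixed symmetric way (an involution on $[6]$). The union of three such involutive matchings is symmetric after taking $\alpha=\mathrm{id}$. The non-complementarity conditions from Lemma~\ref{find3} are what should guarantee that each group can realize its prescribed matching. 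If that fails in some configuration, I would fall back on a case analysis according to how many complementary pairs occur in each $G_t$.
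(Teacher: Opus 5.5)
Your setup agrees with the paper's: three groups from Lemma~\ref{Coloring}, each of the type in Lemma~\ref{find3}, each to be split along a triple $T_t$ with no complementary pair and $|N(T_t)|\ge 3m+3$. The gap is the gluing step, which is the real content of the theorem, and the version you sketch does not match your own construction.

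Splitting $G_t$ along $T_t$ means $G_t$ occupies exactly two classes $f_t,f'_t\in F$. Every color sends one of its group-$t$ balls to $f_t$ and the other to $f'_t$. Girl $i$ then gets $m+1$ balls in $f_t$ if $i\in T_t$ and $m$ otherwise, and the reverse in $f'_t$. So the exceptional entries from group $t$ form two $3$-stars (column $f_t$ supported on $T_t$, column $f'_t$ on $T'_t$), not a perfect matching between $T_t$ and $T'_t$.

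To get an involutive matching per group you would have to spread each group over all six classes. Then you have no block with entries in $\{m,m+1\}$ (for $m\ge 1$ a group has only $12m+6<36m$ balls). You also have no mechanism keeping exactly one ball of each color per class across the three groups. The fallback ``case analysis'' is not specified. As written, nothing produces a relabeling $\alpha$ with $[u,\alpha\circ w]$ symmetric.

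The missing idea is this: with two classes per group, the whole exceptional pattern is determined by the three triples. So you only need triples whose six supports $T_1,T'_1,T_2,T'_2,T_3,T'_3$ can be assigned bijectively to girls so that membership is symmetric. No individual group needs to be symmetric.

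The paper achieves this by chaining Lemma~\ref{find3}, using $f$ to force consecutive triples to share two vertices:
\begin{itemize}
\item a triple $\{1,b,c\}$ for the first group;
\item then, with $\{b,c\}$ non-complementary in the second group and $f=1$, a triple $\{b,c,4\}$;
\item then, after renaming $b,c$ as $2,3$ so that $\{3,4\}$ is non-complementary in the third group, $f=2$ gives a triple $\{3,4,x\}$.
\end{itemize}
If $x$ is a new vertex $5$, the six supports are exactly the cyclic intervals $\{j-1,j,j+1\}$ of $\mathbb{Z}_6$. Sending the class with support $\{j-1,j,j+1\}$ to girl $j$ yields a symmetric pattern. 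If $x=1$, one more application to the first group with $(f,u,v)=(1,2,3)$ gives the configurations $(2,3,4),(2,3,4),(3,4,1)$ or $(2,3,5),(2,3,4),(3,4,1)$. Each of these is symmetrizable by inspection.

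A smaller point concerns the Hall condition for the split itself. The extra requirement is not a bound on $|N(T'_t)|$, which always equals $|N(T_t)|$. It is that $T'_t$ contains no complementary pair: test $X=T_t\cup\{j\}$, which needs $4m+3$ neighbors. This does hold for the triples output by Lemma~\ref{find3}, but it has to be checked.
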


\begin{proof}
First, we color the edges into three colors by Lemma \ref{Coloring}. For each of the three colors we aim to find three vertices $x,y,z$ such that none of them are complementary in the multigraph of that color and $|N(\{x,y,z\})| \ge 3m+3$. Then we can recolor the edges of that color into two colors so that in the first of these two colors the vertices $x,y,z$ have degree $m+1$, all other left vertices have degree $m$, and every right vertex has degree $1$.

Take any two non‑complementary vertices in the multigraph of the first color, and let $f$ be any other vertex from the left part. Set $u$ and $v$ as these two non‑complementary vertices. By Lemma \ref{find3} we obtain a vertex $w$ (distinct from $u,v,f$) such that $u,v,w$ satisfy the required conditions. This gives the desired triple for the first color.

Thus we have three vertices $1,b,c$ with $|N(\{1,b,c\})| \ge 3m+3$ in the first color. Without loss of generality we may assume that $b$ and $c$ are not complementary in the second color. For the multigraph of the second color, take $1,b,c$ as $f,u,v$ respectively and apply Lemma \ref{find3} to find another vertex; call it $4$. Then $b,c,4$ form the required triple for the second color.

Now rename $b$ and $c$ as $2$ and $3$ so that $3$ and $4$ are not complementary in the third color. For the third color, take $f,u,v$ to be $2,3,4$ respectively and apply Lemma \ref{find3} again. The vertex obtained may be $1$ or a new vertex, say $5$.

If it is $5$, we have the triples $(1,2,3)$, $(2,3,4)$ and $(3,4,5)$ for the first, second and third colors respectively.  
If it is $1$, we instead apply Lemma \ref{find3} to the first color with $f,u,v = 1,2,3$ to find either $4$ or $5$ and take this new triple for the first color. Consequently we obtain either the triples $(2,3,4)$, $(2,3,4)$, $(3,4,1)$ or the triples $(2,3,5)$, $(2,3,4)$, $(3,4,1)$ for the three colors. In any case, by the same argument as in Theorem \ref{BallsT}, we can find an involution.
\end{proof}

\subsection{Results related to Conjecture ~\ref{4PartsC}}

First, we present the observation that motivated Conjecture $\ref{4PartsC}$.

\begin {theorem}
If Conjecture $\ref{4PartsC}$ is true and Conjecture $\ref{BallsC}$ is true for connected multigraphs, then Conjecture $\ref{BallsC}$ is true.
\end {theorem}

\begin {proof}
Suppose there exists the smallest disconnected $G \in A_{n,k}$ such that Conjecture \ref{BallsC} fails for $G$. Suppose $G = G_1 \sqcup G_2, G_1 \in A_{n_1,k_1}, G_2 \in A_{n_2,k_2}$, $g=gcd(n,k)$. By Lemma \ref {Coloring}, we can color the edges of $G$ in $g$ colors so that all colored degrees are the same for every vertex. Take $t_i = g n_i/n$. Divide the edges of $G_i$ into two biregular bipartite labeled multigraphs with the same parts: edges of the first $t_i$ colors go to $T_i$, and all the other edges go to $S_i$. Conjecture $\ref{BallsC}$ must hold for $T_i$, and Conjecture $\ref{4PartsC}$ holds for the pair of $S_1$ and $S_2$; thus, Conjecture $\ref{BallsC}$ holds for $G$, which is a contradiction.
\end {proof}

\begin{lemma} \label{makeHamiltonian}
Let $k= mn+\eps, \eps \in \{ -2, 2\}, m \in \mathbb{N}, 
n>2$. For a biregular bipartite labeled multigraph $(L,R,E,u,v)$ with $|L_1|=n,|R|=k,|E|=nk$, and a set $F, |F|=n$, there exists a coloring $w : E \rightarrow F$ as in Lemma~\ref{windFromColoring} such that the graph of exceptional edges in $[u,w]$ is connected (and thus it is just a cycle of length $2n$).
\end{lemma}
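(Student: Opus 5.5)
Start from any coloring $w : E \rightarrow F$ given by Lemma~\ref{windFromColoring}. By the Remark after that lemma, the exceptional edges of $[u,w]$ form a $2$-regular bipartite multigraph $X$ on $L \sqcup F$. So $X$ is a disjoint union of even cycles; a cycle of length $2$ means a double edge, i.e.\ weight $m\pm 2$, but the lemma allows only $m$ or $m\pm1$, so every cycle has length at least $4$. Among all admissible colorings, pick one for which $X$ has the fewest connected components. If there is only one component, it is a cycle of length $2n$ and we are done. Otherwise we perform a local recoloring that merges two cycles, which contradicts minimality.

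Say $\eps=2$; the case $\eps=-2$ is symmetric, with deficient edges in place of surplus ones. An admissible recoloring must keep the condition $[w,v]=W_{F,R}$. The basic move is therefore the following. Take a vertex $r\in R$ and two of its edges $e,e'$ with colors $w(e)=f$, $w(e')=f'$, and swap the two colors. This preserves $[w,v]$. In $[u,w]$ it transfers one unit from the pair $(u(e),f)$ to $(u(e),f')$, and one unit from $(u(e'),f')$ to $(u(e'),f)$.

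The plan is to choose two distinct cycles $C_1\ni f$ and $C_2\ni f'$ and a vertex $r$ that carries an edge $e$ colored $f$ and an edge $e'$ colored $f'$, with $x=u(e)$ and $x'=u(e')$. We want the swap to turn $X$ into $X-\{(x,f),(x',f')\}+\{(x,f'),(x',f)\}$. This holds when $(x,f)$ and $(x',f')$ are exceptional (weight $m+1$) and $(x,f')$, $(x',f)$ have weight $m$. The result is again $2$-regular, and the exchange joins $C_1$ and $C_2$ into one cycle. Such a pair $e,e'$ exists: each color class of $w$ meets every $r\in R$ exactly once, and $m\ge1$, $n>2$.

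If the ideal configuration is not available, compose several swaps along an alternating path. Recoloring edges along an alternating walk in the bipartite graph $[w,v]$ with respect to the colors $f,f'$ keeps every vertex of $R$ balanced, and its effect on $[u,w]$ concentrates at the endpoints. This lets us choose the endpoints $x\in C_1$ and $x'\in C_2$ freely. Put differently, restrict to the two colors $f,f'$ and apply the $m=2$ argument from the proof of Lemma~\ref{Coloring}: the edges of colors $f,f'$ form a union of paths and cycles, and flipping one component changes the weights only at its endpoints.

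The main obstacle is to ensure that the merge does not create a double exceptional edge (weight $m+2$) and does not accidentally split a cycle instead of joining two. I would first try choosing $x,x'$ to be the $L$-neighbors of $f$ and $f'$ in their cycles, so that $(x,f)$ and $(x',f')$ are exceptional. Since $C_1$ and $C_2$ are vertex-disjoint, $(x,f')$ and $(x',f)$ are not exceptional and so have weight exactly $m$. With this choice the exchange yields a single cycle containing $V(C_1)\cup V(C_2)$, which contradicts minimality.
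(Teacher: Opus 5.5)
\textbf{Gap: the existence of the alternating path.} Your framework is the paper's. You minimize the number of cycles of the exceptional graph $X$, then merge two cycles by swapping two colors $f,f'$ taken from different cycles along an alternating path, and your endpoint conditions are the right ones. But you never prove that such a path exists.

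Your first move, a swap at a single vertex $r$, need not be available. The $m+1$ vertices of $R$ where $x$ has an $f$-edge and the $m+1$ vertices where $x'$ has an $f'$-edge can be disjoint, since $|R|=mn+2$. So ``each color class meets every $r$ exactly once'' does not produce the pair $e,e'$.

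Your fallback, that alternating walks let you choose $x\in C_1$ and $x'\in C_2$ freely, is false. A walk you can flip must start at $x$ with an $f$-edge, end with an $f'$-edge, and stay inside the subgraph of colors $f,f'$. That subgraph need not be connected. If $G$ is disconnected, your chosen $x$ and $x'$ can lie in different components of $G$, and then no walk exists at all. This already happens for $n=4$, $m=1$, with $G$ two disjoint copies of a biregular graph on two girls and three colors, and $X$ two $4$-cycles each meeting both copies. Also, for $m\ge 2$ this two-color subgraph is not a union of paths and cycles, since vertices of $L$ have degree about $2m$ in it.

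\textbf{Missing idea: find the endpoint instead of choosing it.} This is the paper's degree-balance step.
\begin{itemize}
\item Orient the $f$-edges from $L$ to $R$ and the $f'$-edges from $R$ to $L$.
\item Every $r\in R$ then has in-degree and out-degree $1$. Your $x$ has out-degree minus in-degree equal to $1$.
\item No edge leaves the set $S$ of vertices reachable from $x$. So summing out-degree minus in-degree over $S$ gives a non-positive number.
\item Hence $S$ contains a vertex $t\in L$ with $f$-weight $m$ and $f'$-weight $m+1$.
\end{itemize}
Any such $t$ works, so you do not need to control which one you reach. The edge $(t,f')$ is exceptional, so $t\in C_2$, and $(t,f)$ is not exceptional. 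Flipping the colors along a directed path from $x$ to $t$ replaces the exceptional edges $xf$ and $tf'$ by $xf'$ and $tf$, which merges $C_1$ and $C_2$.

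Equivalently, pair $f$-edges with $f'$-edges at every vertex and follow the trail that starts with the unpaired $f$-edge at $x$. That trail can only stop at a vertex with an unpaired $f'$-edge.

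With this step supplied, together with its mirror version for $\eps=-2$, your argument becomes the paper's proof.
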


\begin{proof}
Choose $w$ as in Lemma \ref{windFromColoring}, minimizing the number of connected components in the graph of exceptional edges in $[u,w]$. Denote this graph by $C$. Suppose $C$ is disconnected. Let Red and Blue $\in F$ be two colors from different components. Leave only blue and red edges in $G$ and orient the red edges from $L$ to $R$ and the blue edges from $R$ to $L$ to obtain the digraph $D$. We call a vertex of $L$ positive if its outdegree in $D$ is greater than its indegree, and negative if it is less. Denote one of the two neighbors of Red in $C$ by $s$. Since Red and Blue are in different components of $C$, $s$ is not joined with Blue in $C$; i.e., the blue degree of $s$ is $m$. Therefore, $s$ has the same sign as $\eps$.

Assume that $\eps>0$.
Denote the set of vertices reachable from $s$ in $D$ by $S$.
Note that there are no edges from $S$ outwards (but there can be edges inwards). 
Then, by the indegree/outdegree balance for the induced subgraph of $D$ on $S$, it is impossible for all vertices in $S$ to
be nonnegative (since at least one of them, namely, $s$, is positive). In other words, there exists a path from $s$ to a negative vertex $t$. Flip the red and blue color of the edges in this path. 
In the graph $C$, this corresponds to the replacement of edges $s$-Red and $t$-Blue with $s$-Blue and $t$-Red. For a regular graph of degree 2 (a disjoint union of cycles), this operation decreases the number of connected components. A contradiction. 
Thus, $C$ is connected. The case $\eps<0$ is analogous. 
\end{proof}













\begin{proof}[Proof of Theorem \ref{4PartsT}]
 If $k_1$ is divisible by $n_1$, say $k_1 = m n_1$, then $k_2 = m n_2$ and by Lemma \ref{windFromColoring}, there exist $w_1 : E_1 \rightarrow L_2$ and $w_2 : E_2 \rightarrow L_1$ such that $[w_1,v_1]=W_{L_2,R_1}$ and $[w_2,v_2]=W_{L_1,R_2}$ and $[u_i,w_i]$ have no exceptional edges.
 
Else if $k = mn+ \eps, \eps \in \{-1, 1\}$, then by Lemma \ref{windFromColoring} there exist analogous $w_1$ and $w_2$ such that the graphs of exceptional edges in $[u_i, w_i]$ are $1$-regular.

Else $n, k$ satisfy the condition of Lemma \ref{makeHamiltonian}, so there exist $w_1$ and $w_2$ such that the graphs of exceptional edges in $[u_i,w_i]$ are cycles of length $2n$. 

Anyway, we obtain colorings $w_1,w_2$ such that $[w_1,v_1]=W_{L_2,R_1}$ and $[w_2,v_2]=W_{L_1,R_2}$ and $[u_i,w_i]$ are isomorphic with fixed parts. This isomoprhism implies that there exist permutations $\sigma_i$ of $L_i$ and a bijection $\psi : E_1 \rightarrow E_2$ such that $u_2 \circ \psi = \sigma_2 \circ w_1$ and $u_1 \circ \psi^{-1} = \sigma_1 \circ w_2$. Thus, $[u_2 \circ \psi, v_1] = [\sigma_2 \circ w_1, v_1] = \sigma_2 [w_1, v_1] = \sigma_2 W_{L_2, R_1} = W_{L_2, R_1}$. Analogously, $[u_1 \circ \psi^{-1}, v_2] = W_{L_1, R_2}$.
\end{proof}

\subsection{Proof of Theorem~\ref{InequalityImpliesTutte}}

We need a version of Tutte's theorem on perfect matchings (see \cite{T}) for graphs with loops

\begin{lemma}\label{TutteWithLoops}
Let $G=(V,E)$ be a graph, possibly with loops. Then $G$ has a perfect matching (where loops are allowed) if and only if, 
for every subset $U
\subset V$, the graph $G-U$ has at most
$|U|$  odd connected components without loops.
\end{lemma}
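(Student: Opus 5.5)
Lemma~\ref{TutteWithLoops} reduces to Tutte's theorem by a gadget that absorbs looped vertices. A ``perfect matching where loops are allowed'' means an involution $\theta$ on $V$ such that every $v$ is adjacent to $\theta(v)$; a fixed point $\theta(v)=v$ is allowed only at a looped vertex. Let $V_\ell\subseteq V$ be the looped vertices.

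\textbf{Necessity.} Fix $U\subset V$ and a loopless odd component $C$ of $G-U$. Restricted to $C$, $\theta$ has no fixed points, since no vertex of $C$ has a loop. An involution without fixed points on an odd set cannot preserve that set, so some $x\in C$ has $\theta(x)\notin C$. Because $x\theta(x)$ is an edge and $C$ is a component of $G-U$, we get $\theta(x)\in U$. Distinct such components give distinct partners in $U$, as $\theta$ is injective. Hence there are at most $|U|$ of them.

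\textbf{Sufficiency.} Build a loopless graph $G'$ as follows:
\begin{itemize}
\item take a copy $V'_\ell$ of $V_\ell$, with vertices $x'$ for $x\in V_\ell$;
\item make $V'_\ell$ a clique;
\item join each $x'$ to $x$;
\item if $|V|+|V_\ell|$ is odd, add one extra vertex $z$ adjacent to all of $V'_\ell$.
\end{itemize}
If $V_\ell=\varnothing$, we use plain Tutte directly. A perfect matching of $G'$ yields one of $G$: each $x$ matched to $x'$ becomes a fixed point, and edges inside $V$ are kept. So it suffices to verify Tutte's condition for $G'$.

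Take $U'\subseteq V(G')$ and write $U=U'\cap V$. Each odd component $C'$ of $G'-U'$ falls into one of two types.
\begin{itemize}
\item \textbf{Type 1: $C'$ avoids the remaining clique part.} Then $C'$ is either a loopless component of $G-U$, or a single vertex of $V'_\ell\cup\{z\}$ isolated because its neighbours lie in $U'$.
\item \textbf{Type 2: $C'$ meets the remaining part of the clique.} All surviving vertices of $V'_\ell\cup\{z\}$ form one clique, so there is at most one component of this type.
\end{itemize}

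Looped components of $G-U$ are all absorbed into the Type 2 component, provided some clique vertex survives. Counting then gives
\[
\#\{\text{odd comps of }G'-U'\}\le |U| + |U'\setminus V| + 1 .
\]
Tutte's condition is recovered via the parity constraint that $|V(G')|$ is even, which lets us drop the $+1$. In the degenerate case where all of $V'_\ell\cup\{z\}$ lies in $U'$, the clique vertices in $U'$ pay for any looped components that become isolated.

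\textbf{Main obstacle.} The delicate step is this case analysis: keeping the $+1$ from the clique component and the parity adjustment under control. The fallback is a simpler construction. Set $G'' = G\cup\overline{G}$, where the copy has the loops replaced by edges $x\bar x$. In $G''$, vertex $x$ is adjacent to $\bar x$ exactly when $x$ has a loop, and each odd loopless component doubles into two odd components. This reduces the count to Tutte's theorem on a graph with an even number of vertices. An odd component of $G''-U''$ is then either a doubled loopless one, or a mixed one that must meet both $U$ and $\overline U$. The final step symmetrizes a perfect matching of $G''$ back to an involution on $V$, using that $G''$ admits the swap symmetry; this again needs Tutte's theorem applied with care.
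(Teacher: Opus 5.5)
Your necessity argument is correct. Your sufficiency strategy is the paper's: adjoin a clique of new vertices that absorbs the looped vertices, pad to an even vertex count, apply Tutte, and use $o(G'-U')\equiv |U'| \pmod 2$ to drop the $+1$. But your gadget is different. The paper joins \emph{every} new vertex to \emph{every} looped vertex, while you join $x'$ to $x$ alone. With your gadget, the step ``looped components of $G-U$ are all absorbed into the Type~2 component, provided some clique vertex survives'' is false. A looped component $D$ of $G-U$ reaches the clique only through the copies $x'$ with $x\in D\cap V_\ell$. So $D$ is cut off as soon as all those copies lie in $U'$, even if the rest of the clique survives. For example, suppose $G$ has at least two looped vertices and $x$ is a looped vertex with no other neighbours. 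For $U'=\{x'\}$, the singleton $\{x\}$ is an odd component of $G'-U'$ that is neither loopless nor part of the clique component. Your Type~1 list omits exactly these components. (The ``isolated vertex of $V'_\ell\cup\{z\}$'' you place there is really the unique clique-part component.) So the displayed bound is not justified as written.

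The repair is the charging idea you invoke only in the degenerate case, applied in general. Charge each cut-off looped component $D$ to some $x'\in U'$ with $x\in D$. Distinct components are disjoint, so the charges are distinct, and there are at most $|U'\cap V'_\ell|$ such components. Add at most $|U|$ odd loopless components of $G-U$ and at most one component meeting the surviving clique. This gives $|U|+|U'\setminus V|+1$ in every case, and parity finishes.

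Alternatively, use the paper's gadget, for which the absorption claim is literally true. The paper then argues by contradiction from a Tutte set $A$ and first removes the new vertices from $A$. If some new vertex stays outside $A$, this loses at most one odd component while $|A|$ drops by at least one. If all new vertices were in $A$, it loses at most $l$ odd components while $|A|$ drops by at least $l$. After this, $A\subseteq V$ and all looped vertices lie in one component.

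Your fallback $G\cup\overline{G}$ is not needed. In it, a perfect matching restricted to $V$ already gives a loop-allowed perfect matching, so no symmetrization is required. However, the Tutte verification there is the hard part and you leave it open. Cut-off looped components of the first copy and odd loopless components of the second copy would both have to be charged to the vertices deleted from the second copy.
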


\begin{proof}
If a perfect matching exists, then for every odd connected component of $G-U$ without loops, it must contain an edge from
this component to $U$; thus, the condition is satisfied. 

Conversely, suppose the condition is satisfied. 
Denote the set of vertices with loops by $L$; $l=|L|$.
Add $l$ or $l+1$ new vertices so that the total number of vertices becomes even. Connect all the new vertices with each other and with every vertex from $L$, and remove all the loops. We obtain a graph $G'$ without loops. If $G'$ contains a perfect matching, it naturally corresponds to a perfect matching in $G$. 

Suppose $G$ has no perfect matching. Then $G'$ also has no perfect matching; thus, by Tutte theorem,  
there exists a set of vertices $A$ in $G'$
such that $G'-A$ contains at least $|A|+1$ odd components.
By a parity argument, there are actually more than $|A|+1$ odd components.
If $A$ contains some of the new vertices but not all, then if we remove all new vertices from $A$, the number of odd components in $G'-A$ will decrease by at most $1$, and the size of $A$ will decrease by at least one.
If $A$ contains all of the new vertices, then if we remove all new vertices from $A$, the number of odd components in $G'-A$ will decrease by at most $l$, and the size of $A$ will decrease by at least $l$. 

So we can assume that $A$ doesn't contain any new vertices. Then all vertices from $L$ are in one component in $G'-A$. Therefore, $G-A$ has more than $|A|$ odd components without loops, a contradiction.
\end{proof}

\begin {proof}[Proof of Theorem \ref{InequalityImpliesTutte}]
 Consider the matrix $M$ from the statement of Conjecture $\ref{WeakBallsC}$. It suffices to verify the condition from lemma \ref{TutteWithLoops} for the multigraph $M \otimes M^T$. The vertices of the graph $M \otimes M^T=(V,E)$ correspond to the cells of the matrix $M$, and the vertices without loops correspond to the cells with zeros. The multigraph $M \otimes M^T$ is $nk$-regular, so, it suffices to verify that for every set $V_1\subset V$
of odd size without loops,
at least $nk$ edges go out from $V_1$ (this would imply
that for every, say, $m$ disjoint odd sets in $V$ without loops,
these edges must go out to at least $m$ distinct vertices, that is the condition of Lemma \ref{TutteWithLoops}.) 

Consider the set $V_1$ consisting of $t$ vertices corresponding to the cells $(x_1, y_1), \dots , (x_t, y_t)$, where $t$ is odd and $M_{x_i y_i} = 0$
(in other words, $V_1$ does not contain loops). The edge multiplicity between $(x_i,y_i)$ and $(x_j,y_j)$ is $M_{x_i y_j}M_{x_j y_i}$. Hence the number of edges outward from $V_1$ is $nkt - \sum_{i=1}^{t} \sum_{j=1}^t M_{x_i y_j} M_{x_j y_i}$, so it suffices to prove that $\sum_{i=1}^{t} \sum_{j=1}^t M_{x_i y_j} M_{x_j y_i} \leq (t-1)nk$. It follows from Conjecture $\ref{MatrixInequalityC}$.
\end {proof}

\subsection{Several proofs of Theorem~\ref{SquareMatrixT}}

\begin{lemma}\label{LambdasSum0}
If $t$ is odd, $\lambda_1, \dots ,\lambda_t \in [-\kappa, \kappa], \sum_{i=1}^t \lambda_i = 0$, then $\sum_{i=1}^t \lambda_i^2 \leq (t-1)\kappa^2$.
\end{lemma}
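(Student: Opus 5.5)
We use a convexity argument. The function $\Lambda=(\lambda_1,\dots,\lambda_t)\mapsto\sum_{i=1}^t\lambda_i^2$ is convex. The feasible set
$$P=\Bigl\{\Lambda\in[-\kappa,\kappa]^t \suchthat \sum_{i=1}^t\lambda_i=0\Bigr\}$$
is a compact convex polytope. A convex function on such a polytope attains its maximum at a vertex, so it suffices to identify the vertices of $P$ and evaluate the sum of squares there.

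\textbf{Step 1: structure of the vertices.} Let $\Lambda$ be a vertex of $P$. Then $t-1$ of the box constraints $\lambda_i=\pm\kappa$ must be active, in addition to the hyperplane constraint. Suppose instead that two coordinates $\lambda_i,\lambda_j$ both lie strictly inside $(-\kappa,\kappa)$. Moving along the direction $e_i-e_j$ keeps us in $P$ for small displacements in both signs, so $\Lambda$ would not be a vertex. Hence at most one coordinate is strictly inside the interval, and all the others equal $\pm\kappa$.

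\textbf{Step 2: evaluation.} Let $a$ coordinates equal $\kappa$, let $b$ coordinates equal $-\kappa$, and let the remaining coordinate, if there is one, equal $\mu$.

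If every coordinate equals $\pm\kappa$, then $a+b=t$ with $a=b$, which is impossible because $t$ is odd.

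So exactly one coordinate $\mu$ remains, and $a+b=t-1$. The zero-sum condition gives $\mu=(b-a)\kappa$. The constraint $|\mu|\le\kappa$ then forces $|a-b|\le1$. Since $a+b=t-1$ is even, $a-b$ is even, so $a=b$ and $\mu=0$. At such a vertex,
$$\sum_{i=1}^t\lambda_i^2=(t-1)\kappa^2.$$

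\textbf{Conclusion.} By convexity, every point of $P$ satisfies $\sum_{i=1}^t\lambda_i^2\le(t-1)\kappa^2$, which is the claim.

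The only step needing care is the vertex characterization in Step 1. One can avoid it by a direct smoothing argument. If two coordinates are strictly inside $(-\kappa,\kappa)$, replace them by $\lambda_i+s$ and $\lambda_j-s$, pushing them apart. This preserves the sum, does not decrease the sum of squares (since $(x+s)^2+(y-s)^2$ is convex in $s$), and can be continued until one of the two coordinates reaches $\pm\kappa$. Iterating leaves at most one interior coordinate without decreasing the sum of squares, and Step 2 then applies.
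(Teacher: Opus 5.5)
Your proof is correct and follows essentially the same route as the paper. The paper takes a maximizer on the compact feasible set and pushes two interior coordinates apart to show that all but at most one coordinate equal $\pm\kappa$; this is your smoothing remark, and your vertex/convexity version is an equivalent packaging of it. The same parity count then forces the remaining coordinate to be $0$.
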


\begin{proof}
Consider $K \subset \mathbb{R}^t, K = \{ (\lambda_1, \dots, \lambda_t), \lambda_1, \dots ,\lambda_t \in [-\kappa, \kappa], \sum_{i=1}^t \lambda_i = 0 \}$. Since $K$ is compact, the function $\sum_{i=1}^t \lambda_i^2$ attains a maximum on $K$. Let $(\lambda_1, \dots, \lambda_t)$ be the maximum point. Then, for any $i \neq j$, $|\lambda_i| = \kappa$, or $|\lambda_j| = \kappa$, we can move 
the numbers $\lambda_i,\lambda_j$ away from each other while preserving the sum and increasing the sum of squares. Thus, all $\lambda_i$, except possibly one, are equal to $\kappa$ or $-\kappa$. Let us choose $i$ such that $\forall j \neq i$, $\lambda_j \in \{-\kappa, \kappa \}$. $\sum_{i=1}^t \lambda_i = 0$ and $|\lambda_i| \leq \kappa$. Therefore, among $\lambda_j, j \neq i$, the number of $\kappa$ and $-\kappa$ is the same. Hence $\lambda_i = 0$, $\sum_{i=1}^t \lambda_i^2 = (t-1)\kappa^2$
\end {proof}

\begin{proof}[Geometric proof of Theorem \ref{SquareMatrixT}]
Consider the $t \times t$ matrix $B_{ij} = (M_{x_iy_j}M_{x_jy_i})^{1/2}$.

Let $\lambda_1, \dots, \lambda_t$ be the eigenvalues of $B$. Since $B$ is symmetric, all $\lambda_i$ are real.

We have $\sum_{i=1}^{t} \sum_{j=1}^t M_{x_i y_j} M_{x_j y_i}= \Tr B^2= \sum_{i=1}^{t} \lambda_i^2$.

Let $B_i$ be the $i$-th row of the matrix $B$. Then $\|B_i\|_1 = \sum_{j=1}^t B_{ij} = \sum_{j=1}^t (M_{x_iy_j}M_{x_jy_i})^{1/2} \leq (\sum_{j=1}^t M_{x_iy_j}\sum_{j=1}^t M_{x_jy_i})^{1/2} \leq (\|M\|_{\infty} \|M^T\|_{\infty})^{1/2}$.

Hence $|\lambda_i| \leq \|B\|_{\infty} \leq (\|M\|_{\infty} \|M^T\|_{\infty})^{1/2}$ for all $i$.

Moreover, since $B_{ii} = 0$, it follows that $\sum_{i=1}^t \lambda_i = 0$, and by Lemma
\ref{LambdasSum0} $\sum_{i=1}^{t} \lambda_i^2 \leq (t-1)\|M\|_{\infty} \|M^T\|_{\infty}$.

\end{proof}

\begin{proof}[Algebraic proof of Theorem \ref{SquareMatrixT}]
Consider the singular skew-symmetric matrix 
$$
S_{ij}=\begin{cases}
			(M_{x_iy_j}M_{x_jy_i})^{1/2}, & \text{ $i < j$ }\\
            -(M_{x_iy_j}M_{x_jy_i})^{1/2}, & \text{otherwise}
		 \end{cases}
$$

$\Tr S^2 = - \sum_{i=1}^{t} \sum_{j=1}^t M_{x_i y_j} M_{x_j y_i}$.

Denote the eigenvalues of $S$ by $\mu_1, \dots, \mu_t$. Analogously to the bound for $\lambda_i$ from the previous proof, $|\mu_i| \leq (\|M\|_{\infty} \|M^T\|_{\infty})^{1/2}$. Moreover, $det(S)=0$, so one of $\mu_i$ is zero, thus $|\Tr S^2| \leq \sum_{i=1}^{t} |\mu_i|^2 \leq (t -1)\|M\|_{\infty} \|M^T\|_{\infty}$.
\end{proof}

In the following two proofs, without loss of generality, $x_i=y_i=i$. Also, we may assume that $\|M\|_\infty=1\leqslant \|M\|_1$.
Then it suffices to prove that $F:=\sum_{i<j} M_{ij}M_{ji}\leqslant (t-1)/2$ whenever
$M_{ij}$ are non-negative real numbers with restrictions $M_{ii}=0$ and
$\sum_j M_{ij}\leqslant 1$ for all $i$.

\begin{proof}[Combinatorial proof of Theorem \ref{SquareMatrixT}]
Fix all $M_{ij}$ except the first row of matrix $M$ (i.e., for all $i>1$ and all $j$). Then $F$ is multi-affine with respect to $M_{12},\ldots,M_{1t}$,
and its maximal value under restrictions $\sum_j M_{1j}\leqslant 1$ is obtained when one of
$M_{1j}$ equals 1 and others are equal to 0. Change the first row accordingly. Now do the same with the second row, etc. Finally, we have exactly one element 1 in every row. At most $(t-1)/2$ of them form
symmetric pairs, thus indeed $\sum_{i<j} M_{ij}M_{ji}\leqslant (t-1)/2$. 
\end{proof}

The next argument is attributed to Maxim Erogov.

\begin{proof}[Probabilistic proof of Theorem \ref{SquareMatrixT}]
Let number $i$ love number $j$ with probability $M_{ij}$
(these events are disjoint by $j$ and independent by $i$, with some non-negative probability $i$ does not love nobody). Then 
$\sum_{i<j} M_{ij}M_{ji}$ is the expected number of pairs of numbers
mutually loving each other. Obviously, there are always at most $(t-1)/2$
such pairs, thus the result. 
\end{proof}

\section{Further discussion}

Conjecture $\ref{BallsC}$ can also be reformulated in group-theoretic language.

We denote by $S_k^n$ and $S_n^k$ the left and right stabilizers of $(u_0,v_0)$, respectively.

The normalizers of $S_k^n$ and $S_n^k$ in $S_{n,k}$ are $S_k \wr S_n$ and $S_n \wr S_k$, respectively.

\begin {theorem}
The following are equivalent: 

(1) Conjecture \ref{BallsC}; 

(2) $IS_n^kS_k^n=S_{n,k}$; 

(3) $I (S_n \wr S_k) (S_k \wr S_n) = S_{n,k}$.

\end {theorem}

\begin {proof}

(1) $\Rightarrow$ (2): If Conjecture \ref{BallsC} is true, then $\forall \sigma \in S_{[n],[k]}$, $\exists \iota \in I, [ \iota \sigma (u_0, v_0)] = W_{[n],[k]}$; so $\exists \xi \in S_n^k, \xi^{-1} \iota \sigma (u_0, v_0) \xi = (u_0, v_0)$. Thus, $\sigma = \iota \xi (\xi^{-1} \iota \sigma) \in I S_n^k S_k^n$.

(2) $\Rightarrow$ (3) is trivial.

(3) $\Rightarrow$ (1): We need to find $\iota \in I$ such that $[\iota \sigma (u_0, v_0) \pi] = W_{[n],[k]}$ for arbitrary $\sigma, \pi \in S_{n,k}$. Since $\pi^{-1} I \pi = I$, it is equivalent to finding $\iota$, $[\iota \pi \sigma (u_0, v_0)] = W_{[n],[k]}$. Take $\iota \in I, \xi \in S_n \wr S_k, \eta \in S_k \wr S_n$, $\pi \sigma = \iota \xi \eta$. Then $[\iota \pi \sigma (u_0, v_0)] = [\eta (u_0, v_0) \xi] = \alpha \beta [\eta' (u_0, v_0) \xi']$ for some $\alpha \in S_n, \beta \in S_k$, $\eta' \in S_k^n$, $\xi' \in S_n^k$.

\end {proof}

\section{Acknowledgements}

We are grateful to the Adygeya problem solving workshops of 2019
and 2024 organized by the Combinatorics group in MIPT, where a significant part of the progress on 
the topic was achieved. Namely, the case $n=k$ was 
proved by the first author at the 2019 workshop, and later at the
same workshop, the joint efforts of him, Rom Pinchasi, and Ran Ziv allowed for the 
establishment of the $\eps \in \{-1,0,1\}$ case of Theorem \ref{BallsT}. Later,
the $|k-n|\leqslant 2$ case of  Theorem \ref{BallsT}
was obtained by a different argument by Narek Oganisyan
from St. Petersburg University. 
Theorem \ref{4PartsT} was obtained at the 2024 Workshop
by the third and fourth authors.
We would like to thank Sergey Onishchenko from St. Petersburg State University for pointing out a version of Tutte's theorem on perfect matchings for graphs with loops.

The work of F. Petrov and F. Ushakov was performed at the Saint Petersburg Leonhard Euler
International Mathematical Institute and was supported by the Ministry of Science and Higher
Education of the Russian Federation (agreement no. 075–15–2025–343).


\end{document}